\titleformat{\subsection}[runin]
{\bfseries} {\thesubsection{.}}{0.15cm}{}[.]
\titleformat{\subsubsection}[runin]
{\em}{\thesubsubsection{.}}{0.15cm}{}[.]
\newtheorem{theorem}{Theorem}[section]
\newtheorem{lemma}[theorem]{Lemma}
\theoremstyle{definition}
\newtheorem{definition}[theorem]{Definition}
\numberwithin{equation}{section}
\numberwithin{figure}{section}
\def\Ccal{\mathcal{C}}
\def\Lcal{\mathcal{L}}
\def\Ocal{\mathcal{O}}
\def\c{\mathbb{C}}
\def\cn{\mathbb{C}^n}
\def\z{\mathbb{Z}}
\def\d{\mathbb{D}}
\def\cd{\overline{{D}}}
\def\b{\mathbb{B}}
\def\r{\mathbb{R}}
\def\n{\mathbb{N}}
\def\z{\mathbb{Z}}
\def\dist{\mathrm{dist}}
\newcommand\di{\partial}
\newcommand\dibar{\overline\partial}
\begin{document}

\fancyhead[LO]{Proper holomorphic curves attached to domains}
\fancyhead[RE]{B.\ Drinovec Drnov\v sek \& M.\ Slapar}
\fancyhead[RO,LE]{\thepage}

\thispagestyle{empty}


\begin{center}
{\bf\Large Proper holomorphic curves attached to domains}

\vspace*{0.3cm}

{\large\bf Barbara Drinovec Drnov\v sek \& Marko Slapar}
\end{center}


\vspace*{0.5cm}

\begin{quote}
{\small
\noindent {\bf Abstract}\hspace*{0.1cm}   
Let $D\Subset \c^n$ be a domain with smooth boundary, of finite 1-type at a point $p\in bD$ and such that $\cd $ has a basis of Stein Runge neighborhoods. Assume that there exists an analytic disc which intersects $\cd$ 
exactly at $p$.
We construct proper holomorphic maps from any open Riemann surface $S$ to  $\cn$ which are attached to $\cd$ 
exactly  at $p$.
\vspace*{0.1cm}

\noindent{\bf Keywords}\hspace*{0.1cm} weakly pseudoconvex domain, holomorphic curve.

\vspace*{0.1cm}


\noindent{\bf MSC (2010)} \hspace*{0.1cm} Primary: 32C25; 
Secondary: 32H02, 32H35.}
\end{quote}



\section{Introduction}
\label{sec:intro}

We study sufficient conditions for the existence of a proper 
holomorphic curve attached to the boundary of a domain, which is parametrized by 
an {\em open Riemann surface} or by {\em a finite bordered Riemann surface}, i.e.,
a one dimen\-sional complex manifold with compact closure
$\bar S=S\cup bS$ whose boundary $bS$ consists of finitely many closed 
Jordan curves.

We will denote  by $\d$ the open unit disc in $\c$.
Let $D\subset \c^n$ be a bounded domain with smooth boundary, let $p \in bD$ and
let $\rho$ be a smooth defining function for $bD$ near $p$. 
Assume that $f\colon \d\to \c^n$  
is a holomorphic map  such that $f(0)=p$. 
If $\rho\circ f$ has a zero of finite order at $0$ and
$f(r\d\setminus\{0\})\cap \overline D=\emptyset$ for some $r>0$
then we say that \emph{$f$ has
a finite order of contact with $\overline D$ at $p$}. 
The definition does not depend on the choice of the defining function.
It is local and it extends to holomorphic maps from Riemann surfaces.


\begin{theorem}
\label{thm1}
Let $D\subset \c^n$ be a bounded domain with smooth boundary, $p\in bD$,
and assume that $\cd $ has a basis of Stein Runge neighborhoods.

\begin{itemize}
\item[(1)]  
Given a bordered Riemann surface $S$, $s\in S$, and a continuous map 
$f\colon \bar S\to \c^n$, holomorphic on $S$, with $f(s)=p$, having a 
finite order of contact with $\overline D$ at $p$, and $f(S\setminus\{s\})\cap \cd  =\emptyset$,
 there exists a proper holomorphic map 
$g\colon S\to\c^n$ with $g(s)=p$ and $g (S\setminus\{s\})\cap \cd  =\emptyset$.
\item[(2)]
Given an open Riemann surface $S$, a compact $\Ocal (S)$-convex subset $K\subset S$ with the 
nonempty interior $\mathring K$,
$s\in \mathring K$ and a holomorphic map 
$f\colon K\to \c^n$ with $f(s)=p$, having a finite order of contact with $\overline D$ at $p$, and 
$f(K\setminus\{s\})\cap \cd  =\emptyset$, 
there exists a proper holomorphic map 
$g\colon S\to\c^n$ with $g(s)=p$ and $g (S\setminus\{s\})\cap \cd  =\emptyset$.
\end{itemize}

In particular, if $D$ is totally pseudoconvex and of finite 1-type 
at a point $p\in bD$,
then there exist a proper holomorphic map 
$g\colon\d\to\c^n$ with $g(0)=p$  and $g (\d\setminus\{0\})\cap \cd  =\emptyset$ and a proper holomorphic map 
$g\colon\c\to\c^n$ with $g(0)=p$ and $g (\c\setminus\{0\})\cap \cd  =\emptyset$.
\end{theorem}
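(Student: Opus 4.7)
The strategy is to construct the desired proper map $g$ as a uniform-on-compacts limit of holomorphic maps $g_k$ defined on an exhaustion of $S$; each $g_{k+1}$ will approximate $g_k$ very sharply on the previous exhaustion piece, while its image on the newly added annular region is pushed both outside a large ball in $\cn$ and outside a shrinking Stein Runge neighborhood of $\cd$. Sufficient closeness of the approximations on a fixed disc around $s$ preserves the finite order of contact with $\cd$ at $s$ (by a Rouch\'e / Weierstrass-preparation argument), while the pushing forces the limit to be proper and to satisfy $g^{-1}(\cd)=\{s\}$.

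\textbf{Setup and induction.} Fix a decreasing basis $U_1\supset U_2\supset\cdots$ of Stein Runge neighborhoods of $\cd$ with $\bigcap_k U_k=\cd$. For part (2), take a normal exhaustion $K=K_0\subset K_1\subset\cdots$ of $S$ by $\Ocal(S)$-convex compacta; for part (1), take an exhaustion by relatively compact smoothly bounded subdomains $\Omega_1\Subset\Omega_2\Subset\cdots$ of $S$ with $s\in\Omega_1$. Let $g_0$ equal the given data $f$, possibly after a Mergelyan approximation on $K_0$ (resp.\ $\overline{\Omega_1}$). Suppose $g_k$ has been constructed on $K_k$ with $g_k(s)=p$, the prescribed order of contact at $s$, and $g_k^{-1}(\cd)\cap K_k=\{s\}$. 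Choose $\varepsilon_k>0$ so small that any holomorphic $g_{k+1}$ with $\|g_{k+1}-g_k\|_{K_k}<\varepsilon_k$ automatically inherits the same finite order of contact at $s$ and satisfies $g_{k+1}^{-1}(\cd)\cap K_k=\{s\}$. One then constructs $g_{k+1}$ on $K_{k+1}$ such that
\[
\|g_{k+1}-g_k\|_{K_k}<\varepsilon_k,\quad g_{k+1}(K_{k+1}\setminus\mathring K_k)\subset\cn\setminus U_k,\quad \|g_{k+1}(x)\|>k\ \text{on}\ K_{k+1}\setminus\mathring K_k.
\]
Summability $\sum_k\varepsilon_k<\infty$ then yields a uniform-on-compacts limit $g$ with all required properties.

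\textbf{Main obstacle.} The crux is the pushing-while-approximating step. Given $g_k$ on $K_k$, one must modify it near $\partial K_k$ so that the image moves into $\cn\setminus U_k$ without disturbing the approximation on an interior set, and then extend this modification holomorphically to $K_{k+1}$ (relying on the $\Ocal(S)$-convexity of $K_{k+1}$ and the Behnke--Stein / Runge theorem on $S$) while simultaneously driving the norm on $K_{k+1}\setminus\mathring K_k$ above $k$. The Stein Runge hypothesis on $\cd$ is essential here: the Stein property of each $U_k$ allows us to build local holomorphic perturbations with values off $\cd$, and the Runge property allows us to approximate these by entire maps on $\cn$, which can then be composed with Runge extensions on $S$ to produce $g_{k+1}$.

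\textbf{The ``in particular'' statement.} When $D$ is totally pseudoconvex and of finite $1$-type at $p$, standard local constructions (e.g.\ Puiseux-parametrizing a $1$-dimensional complex curve of maximal contact with $bD$ at $p$) produce a holomorphic map $f\colon\d\to\cn$ with $f(0)=p$, finite order of contact with $\cd$ at $0$ and $f(\d\setminus\{0\})\cap\cd=\emptyset$; moreover total pseudoconvexity furnishes the basis of Stein Runge neighborhoods of $\cd$. Restricting $f$ to a slightly smaller closed disc and applying part (1) with $S=\d$ then produces the proper $g\colon\d\to\cn$, while applying part (2) with $S=\c$ and $K$ a small closed disc around $0$ (using $f$ as initial data) produces the proper $g\colon\c\to\cn$.
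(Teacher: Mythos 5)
There is a genuine gap at the step you declare automatic: the claim that one can choose $\varepsilon_k$ so small that any holomorphic $g_{k+1}$ with $\|g_{k+1}-g_k\|_{K_k}<\varepsilon_k$ ``automatically inherits'' $g_{k+1}(s)=p$, the finite order of contact, and $g_{k+1}^{-1}(\cd)\cap K_k=\{s\}$. Uniform closeness gives none of this near $s$. First, no approximation hypothesis forces the exact equality $g_{k+1}(s)=p$; this must be interpolated at every stage, or the limit will not pass through $p$ at all. Second, even if the value is interpolated, the finite order of contact only yields $\dist(g_k(z),\cd)\ge C|z|^k$ in a local coordinate $z$ at $s$, so a perturbation of size $\varepsilon_k$ loses all control on the disc $|z|\lesssim(\varepsilon_k/C)^{1/k}$: for instance $g_{k+1}(z)=g_k(z)+\varepsilon_k z\,v$ with $v$ pointing into $D$ satisfies your closeness requirement yet enters $D$ at points arbitrarily near $s$ once $k\ge 2$. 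A Rouch\'e or Weierstrass-preparation argument does not repair this, since $\rho\circ g$ is not holomorphic and, even heuristically, the contact could split into several nearby intersection points with $\cd$. The correct statement --- and the paper's key Lemma \ref{lemmasmallperturb} --- is that non-intersection on a punctured neighborhood of $s$ survives a perturbation that is $\varepsilon$-close \emph{and has the same $(k-1)$-jet at $s$}; consequently every approximation step of the induction must carry jet interpolation at $s$ (conditions (iv)--(v) in the paper's construction, and the jet-interpolating versions of the pushing lemmas). This is precisely the paper's main new ingredient, and it is absent from your argument.

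A second, smaller issue concerns the pushing step. Behnke--Stein/Runge approximation on $S$ combined with the Runge property of the neighborhoods $U_k$ does not by itself produce a holomorphic extension to $K_{k+1}$ whose values on $K_{k+1}\setminus\mathring K_k$ simultaneously avoid $U_k$ and have norm larger than $k$; linear approximation theorems place no constraint on where the image lands. The paper obtains this control either from an $(n-1)$-convex exhaustion of the complement of a neighborhood of $\cd$ together with the Riemann--Hilbert/spray-gluing method (Theorem \ref{thmMainSec2}, for bordered $S$), or from the density property of $\cn$ via Anders\'en--Lempert theory (Theorem \ref{thmopen}, for open $S$); some such nonlinear mechanism must be invoked. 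Finally, total pseudoconvexity at the single point $p$ does not ``furnish'' a Stein Runge neighborhood basis of $\cd$; that remains a standing hypothesis in the ``in particular'' clause, where its role is only to supply, together with finite $1$-type, the initial disc $f$ of finite contact order.
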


It follows by Remmert's proper mapping theorem that the image $g(S)$ is 
an analytic subvariety of $\c^n$,
in particular, if $n=2$, it is a global support hypersurface for $D$ at $p$:
We call an analytic
hypersurface $M$ \emph{a local support hypersurface} for $D$ at $p$ if there is a 
neighborhood $U$ of $p$ in $\c^n$ such that  $M\cap U\cap \cd =\{p\}$. If $M\cap \cd =\{p\}$ we call $M$ \emph{a global support hypersurface}.
The domain $D$ is called \emph{totally pseudoconvex} at $p\in bD$ if there is a  local 
support hypersurface for $D$ at $p$ \cite{BehnkeThullen1970,Range1978PJM}.
Let $\rho$ be a smooth defining function for $bD$, and let $p \in bD$. If the maximum order of vanishing of $\rho\circ f$ for all one dimensional complex curves $f\colon \d\to \c^n$, with $f(0)=p$  is finite, then we say that $bD$ is of \emph {finite  1-type} at $p$ \cite{DAngelo1993book}. Denote by $\Ocal (X)$ the algebra of all holomorphic functions on the complex manifold $X$, endowed with the compact-open topology. A compact set $K$ in $X$ is said to be $\Ocal (X)$-convex if for every point $x\in X\setminus K$ there exists $F\in \Ocal (X)$  with $|F(x)| > \sup_K |F|$.

We shall actually prove part (1) in Theorem \ref{thm1} for domains $D$ in $(n-1)$ convex 
complex manifolds $X$, see Theorem \ref{thmMainSec2}, and part (2) in Theorem \ref{thm1} for domains $D$ in Stein manifolds $X$ with the density property, see Theorem \ref{thmMainSec2.2}.
Furthermore, the map $g$ can be chosen an immersion, and if $\dim X\ge 3$ then $g$
can be chosen an embedding. Moreover, we are able to attach proper holomorphic curves 
to more general compact sets than smoothly bounded domains $\overline D$;
here we assume that the domain $D$ has $\Ccal^\infty$ smooth boundary.

We call the image of a proper holomorphic map from an open Riemann surface  \emph{a proper
holomorphic curve}.
Proper holomorphic discs in pseudoconvex domains through any given point were constructed 
in \cite{ForstnericGlobevnik1992} by Forstneri\v c and Globevnik. 
The results  evolved in various directions, for a thorough survey of recent results we refer to \cite{DrinovecForstneric2007DMJ}.

We say that a domain $D\subset \c^n$ is \emph{locally convexifiable} at
a point $p\in bD$ if there exists a biholomorphic change of coordinates near $p$ 
such that in the new coordinates near $p$ the boundary 
$bD$ is geometrically strictly convex with respect to the side on which $D$ lies.
In this case, there exists a local support hypersurface to $bD$ at $p$,
in particular, $D$ is totally pseudoconvex at $p$.
If $D\subset \c^n$ is a smoothly bounded strictly pseudoconvex domain then it is locally convexifiable at any boundary point by Narasimhan's lemma.  

Kohn-Nirenberg example \cite{KohnNirenberg1973MA} shows that local support hypersurfaces do not exist in general at weakly pseudoconvex points of pseudoconvex domains.
Recently, Diederich, Forn\ae ss and Wold \cite{DiederichFornaessWold2014JGA}
proved that for any bounded domain $D\subset \c^n$ which is locally
convexifiable and of finite 1-type near  $p\in bD$, and such that $\cd $ has a basis of Stein Runge neighborhoods, there 
exists an automorphism $\Psi$ of $\c^n$ such that $\Psi(p)$ is a global extreme point,
i.e.,  $\Psi(\overline D)\cap {b\b^n}=\Psi(p)$, where 
$\b^n$ denotes the open unit ball in $\c^n$. 
In particular, there exists a smooth support hypersurface for $D$ at $p$.

Kol\'a\v r \cite{Kolar1995MRL,Kolar2008MZ} constructed examples of smoothly bounded 
nonconvexifiable pseudoconvex domains with convex models, 
in particular, with local support hypersurfaces.
We provide an example of a bounded pseudoconvex domain $D\subset \c^2$ with smooth boundary, such that $\cd$  
has a Stein Runge neighborhood basis and a weakly pseudoconvex point $p\in bD$ of finite 1-type  such that the 
domain $D$ is not locally convexifiable at $p$ and 
that $D$ has a global support hypersurface at $p$, see Section \ref{example}.

\section{Control of the placement of the curve near a given point}
\label{sec:proof}

We will consider holomorphic curves attached to more general compact sets than closures of smoothly bounded domains:

\begin{definition}\label{defin}
Let $L\subset \c^n$ be a compact set, $p\in L$ and $f\colon \d\to \c^n$ a holomorphic map
such that $f(0)=p$. The map \emph{$f$ has a finite order of contact with $L$ at $p$}
if there are $C>0$, $r>0$ and $k\in\n$ such that
$\dist(f(z), L)\ge C|z|^k$ for all $|z|\le r$. Since the definition of a finite order of contact is local it extends to maps from Riemann surfaces to complex manifolds equipped with a Riemannian metric.
\end{definition}

For a smoothly bounded domain $D$,  $p \in bD$ and
$f\colon \d\to \c^n$ a holomorphic map  such that $f(0)=p$ 
the definition of a finite order of contact of $f$ with $\bar D$ at $p$ coincides with the
definition at the beginning  of this note: Since the signed distance $\rho$ is a defining function for $bD$,
the estimate in Definition \ref{defin} implies that $\rho\circ f$ has a zero of finite order at $0$. Conversely, choose a 
defining function $\rho$ of $bD$ near $p$. If $f\colon \d\to \c^n$ is a holomorphic map with $f(0)=p$ having a finite order of contact with $\overline D$ according to the definition in the Introduction, then $\rho\circ f$ has a zero of finite order at $0$ and
$f(r\d\setminus\{0\})\cap \overline D=\emptyset$ for some $r>0$.
Therefore, it holds that $(\rho\circ f)(z)=P_k(z,\bar z)+o(|z|^k)$, where $P_k$ is a real homogeneous
polynomial of degree $k$, and $P_k(z,\bar z)>0$ for $z\ne 0$. This implies that
there are $C>0$ and $r'>0$ small enough such that
$\dist(f(z),\overline D)\ge C|z|^k$ for all $|z|\le r'$.
Thus $f$ has a finite order of contact with $\overline D$ at $p$ according to 
Definition \ref{defin}.

If the map $f$ has a finite order of contact with compact set $L$ at $p$,
then the curve that intersects $L$ only in $p$ allows small perturbations, which, in a 
neighborhood of $p$, intersect $L$ only in $p$. 
The following lemma provides appropriate small perturbations of a holomorphic disc in a 
neighborhood of $p$.

\begin{lemma}
\label{lemmasmallperturb}
Let $L\subset \c^n$ be a compact subset, $p\in L$,
and $ f \colon\d\to\c^n$ a holomorphic map with $ f (0)=p$
and having a finite order of contact with $L$ at $p$.
Then there exist $r$, $0<r<1$, and an integer $k\ge 0$ 
such that for any $r'\in (r,1)$  there exist $\epsilon>0$ 
such that for any holomorphic map 
$ g \colon\d\to\c^n$ satisfying 
$ f ^{(m)}(0)= g ^{(m)}(0)$ for $m\in \{0,\ldots,k-1\}$ and 
$| f ( z )- g ( z )|<\epsilon $ for $| z |\le r'$ it holds that
 $g (r\overline\d\setminus\{0\})\cap L  =\emptyset$.
\end{lemma}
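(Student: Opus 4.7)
The plan is to convert the finite-order-of-contact hypothesis into a polynomial lower bound $\dist(f(z), L) \ge C|z|^k$ near $0$, exploit the higher-order tangency $f^{(m)}(0) = g^{(m)}(0)$ for $m < k$ to obtain a matching polynomial \emph{upper} bound on $|f-g|$, and then close by the triangle inequality.

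First I invoke Definition \ref{defin} to fix $r \in (0,1)$, $C > 0$ and $k \in \n$ with $\dist(f(z), L) \ge C|z|^k$ for all $|z| \le r$; these are the $r$ and $k$ asserted by the lemma. Next, for an arbitrary $r' \in (r,1)$ and a $g$ as in the statement, the derivative-matching condition means each component of $f - g$ vanishes to order at least $k$ at the origin, so $h(z) := (f(z) - g(z))/z^k$ extends to a holomorphic map $\d \to \c^n$. Since $|h|^2 = \sum_j |h_j|^2$ is subharmonic, the maximum principle on the closed disc of radius $r'$ gives
\[
|h(z)|^2 \le \max_{|w|=r'} \frac{|f(w)-g(w)|^2}{(r')^{2k}} \le \frac{\epsilon^2}{(r')^{2k}} \qquad (|z| \le r'),
\]
i.e.\ $|f(z) - g(z)| \le \epsilon\, |z|^k/(r')^k$ throughout the disc of radius $r'$.

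Finally, on $0 < |z| \le r$ the triangle inequality yields
\[
\dist(g(z), L) \ge \dist(f(z), L) - |f(z) - g(z)| \ge \Bigl(C - \epsilon/(r')^k\Bigr)|z|^k,
\]
which is strictly positive as soon as $\epsilon < C (r')^k$. Any such $\epsilon$ therefore forces $g(r\overline\d \setminus \{0\}) \cap L = \emptyset$, as required. The only genuine conceptual obstacle I would anticipate is explaining why mere $C^0$-closeness $|f - g| < \epsilon$ cannot suffice: the lower bound $C|z|^k$ degenerates at $0$, and the derivative-matching hypothesis is precisely the mechanism that makes $f-g$ decay at the same polynomial rate (via the Schwarz-type application of the maximum principle to $h$), which is exactly what the estimate needs to go through.
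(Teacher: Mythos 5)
Your proof is correct and follows essentially the same route as the paper's: both reduce the problem to a Schwarz-type estimate $|f(z)-g(z)|\le C_1|z|^{k}\max_{|w|\le r'}|f(w)-g(w)|$ for $|z|\le r$ and then compare it with the lower bound $\dist(f(z),L)\ge C|z|^{k}$. The only difference is cosmetic: you obtain the Schwarz estimate by factoring out $z^{k}$ and applying the maximum principle, whereas the paper invokes the Taylor remainder together with the Cauchy formula; your variant has the minor advantage of yielding the explicit constant $C_1=(r')^{-k}$.
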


\begin{proof}
There are $C\in \r$, $r>0$ and $k\in\n$ such that
$\dist(f(z), L)\ge C|z|^k$ for all $|z|\le r$.
Therefore, for any holomorphic map $ g \colon\d\to\c^n$ satisfying
$| f ( z )- g ( z )|\le\frac C2 | z |^k$ for $| z |\le r$ it holds that
 $g (r\overline\d\setminus\{0\})\cap L  =\emptyset$. 

Choose any $r'\in (r,1)$.
Assume that $ g \colon\d\to\c^n$ is a holomorphic map  such that 
$ f ^{(m)}(0)= g ^{(m)}(0)$ for $m\in \{0,\ldots,k-1\}$.
By the Taylor expansion of $f-g$ around $0$ with the estimate of the remainder and by Cauchy formula we get
$$|f(z)-g(z)|\le C_1|z|^{k}\max\{|f(z)-g(z)|\colon z\in r'\overline\d\} \text{ for }z\in r\overline\d,$$
where the constant $C_1>0$ depends only on $r$ and $r'$.
We let $\epsilon=\frac{C}{2C_1}$. 
This proves the lemma.
\end{proof}

\section{Proper holomorphic curves in $(n-1)$-convex complex manifolds}

Part (1) in Theorem \ref{thm1} follows from the following theorem:

\begin{theorem}
\label{thmMainSec2}
Let $X$ be a complex manifold of $\dim X=n>1$ equipped with a Riemannian metric $d$ and let 
$L\subset X$ be a compact set.
Assume that for any neighborhood $W$ of $L$
there exists a smooth exhaustion function $\rho\colon X\to\r$ 
that is $(n-1)$-convex on $X_c=\{x\in X\colon \rho(x)>c\}$ for some $c\in \r$ such that $L\subset X\setminus \overline X_c\subset W$.  

Given $\epsilon>0$, a bordered Riemann surface $S$, $s\in S$, a compact subset $K\subset S$, 
a continuous map $f\colon \bar S\to X$, holomorphic on $S$ with $f(s)=p$, having
 a finite order of contact with $L$ at $p$ and such that
$f(S\setminus\{s\})\cap L  =\emptyset$, there exists a proper holomorphic map 
$g\colon S\to X$ with $g(s)=p$,  $g (S\setminus\{s\})\cap L  =\emptyset$ 
and $d(g(z),f(z))<\epsilon$ for $z\in K$.
\end{theorem}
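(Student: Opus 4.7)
My plan is to run the standard Drinovec Drnov\v sek--Forstneri\v c pushing-out scheme for proper holomorphic maps into $(n-1)$-convex manifolds, modified by imposing a jet interpolation condition at $s$ that preserves the contact of $f$ with $L$ at $p$. In a holomorphic chart around $p$ and a local uniformizer at $s$, I first apply Lemma \ref{lemmasmallperturb} to $f$ restricted to a small disc around $s$. This yields an integer $k\ge 0$, a radius $r\in(0,1)$ (identifying a neighborhood of $s$ with $\d$), and, for a suitable $r'\in(r,1)$, a tolerance $\epsilon_0>0$ such that any holomorphic map agreeing with $f$ to order $k-1$ at $s$ and $\epsilon_0$-close to $f$ on the disc of radius $r'$ misses $L$ on $r\overline\d\setminus\{s\}$. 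Next, I choose a neighborhood $W$ of $L$ so small that $f(\overline S\setminus r\overline\d)$ has positive distance $2\eta$ from $W$, apply the hypothesis to this $W$ to obtain an exhaustion $\rho$ that is $(n-1)$-convex on $X_c=\{\rho>c\}$ with $L\subset X\setminus\overline{X_c}\subset W$, pick $c=c_0<c_1<c_2<\cdots\to\sup\rho$, and set the working tolerance $\delta=\min\{\epsilon,\epsilon_0,\eta\}$.

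I then build inductively holomorphic maps $f_j\colon S\to X$ extending continuously to $\overline S$, and compact sets $K_0\subset K_1\subset\cdots$ exhausting $S$ with $K\cup r\overline\d\subset K_0$, satisfying (a) $f_j^{(m)}(s)=f^{(m)}(s)$ for $m=0,\ldots,k-1$, (b) $\sup_{K_{j-1}}d(f_j,f_{j-1})<\delta\,2^{-j}$, and (c) $\rho\circ f_j>c_j$ on $\overline S\setminus K_j$. The passage $f_{j-1}\mapsto f_j$ is the pushing-out step of Drinovec Drnov\v sek--Forstneri\v c through the slab $\{c_{j-1}<\rho<c_j\}$, split into a noncritical case (a holomorphic homotopy moving the image of $\overline S\setminus K_{j-1}$ into a higher sublevel of $\rho$, followed by a Mergelyan-type approximation on $S$) and a critical case (attaching a small embedded complex disc at each critical point of $\rho$ in the slab, using $(n-1)$-convexity, and then performing the analogous homotopy and approximation). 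The essential new ingredient is a jet-interpolated version of the approximation at $s$: writing the required correction in local coordinates as $\zeta^{k}\cdot h(\zeta)$ where $\zeta$ is a uniformizer at $s$, I apply the standard approximation theorem to $h$ and absorb the factor $|\zeta|^{k}$ into the error estimates. Verifying that this interpolation is compatible with every stage of the noncritical/critical construction without destroying the uniform control is the main technical obstacle.

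Finally, (b) implies that $f_j$ converges uniformly on compact subsets of $S$ to a holomorphic map $g\colon S\to X$ with $d(g,f)<\delta\le\epsilon$ on $K$; condition (a) survives in the limit, so $g(s)=p$ and $g$ agrees with $f$ to order $k-1$ at $s$; condition (c) forces $\rho\circ g\to\sup\rho$ at the ideal boundary of $S$, hence $g$ is proper. By Lemma \ref{lemmasmallperturb} and the choice of $\epsilon_{0}$, we have $g(r\overline\d\setminus\{s\})\cap L=\emptyset$. On $\overline S\setminus r\overline\d$ the total perturbation is at most $\delta\le\eta$, so $g$ stays at distance at least $\eta$ from $W\supset L$, and combining these two observations yields $g(S\setminus\{s\})\cap L=\emptyset$ as required.
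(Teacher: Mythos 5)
Your overall strategy is the same as the paper's: localize at $s$ via Lemma \ref{lemmasmallperturb} with jet interpolation to order $k$, and run the Drinovec Drnov\v sek--Forstneri\v c pushing-out induction with respect to an $(n-1)$-convex exhaustion $\rho$. However, your inductive hypotheses (a)--(c) are too weak to yield the conclusions you draw from them. On the annular region $K_{j+1}\setminus K_j$ neither (b) (which controls $d(f_{j+1},f_j)$ only on $K_j$) nor (c) (which bounds $\rho\circ f_{j+1}$ from below only outside $K_{j+1}$) says anything about $f_{j+1}$, so nothing prevents $f_{j+1}$ --- and hence the limit $g$ --- from dropping back into $L$ there; for the same reason properness of $g$ does not follow from (b) and (c) alone. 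The missing ingredient is the persistent two-sided slab condition of the paper's induction (condition (iii) in the proof, i.e.\ property (ii) of Lemma \ref{inductivestep}): each new map must satisfy $c_{j-1}<\rho\circ f_j<c_{j+1}$ on \emph{all} of $\overline S\setminus K_{j-1}$, not merely be close to its predecessor on $K_{j-1}$; moreover the tolerances $\delta\,2^{-j}$ must be tied to the gaps between consecutive levels (the paper's condition \eqref{rho}) so that this lower bound survives passage to the limit.

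Relatedly, the final assertion that ``on $\overline S\setminus r\overline\d$ the total perturbation is at most $\delta\le\eta$'' is not just unjustified but false: $g$ is proper while $f$ extends continuously to the compact set $\overline S$, so $g$ must be far from $f$ near $bS$; uniform closeness holds only on the fixed compact $K_0$. The correct way to rule out intersections with $L$ outside the small disc is the one the paper uses: show that $\rho\circ g>c_0>\max_L\rho$ on $S\setminus K_0$ by combining the persistent slab condition with the tolerance--level compatibility, and handle $K_0\setminus r\overline\d$ by the uniform $\eta$-closeness to $f$ as you do. With these two additions your argument coincides with the paper's proof.
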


Recall that a smooth function 
$\rho\colon X\to\r$ on a complex manifold of $\dim X=n>1$ is said to be {\em $q$-convex} 
on an open subset $U\subset X$ (in the sense of Andreotti-Grauert \cite{AndreottiGrauert1962BSMF},
\cite[def.\ 1.4, p.\ 263]{Grauert1994})
if its Levi form $i\di\dibar \rho$ 
has at most $q-1$ negative or zero eigenvalues at each point of $U$.
Note that $1$-convex functions are exactly strongly plurisubharmonic functions.
The manifold $X$ is {\em $q$-complete}, resp.\ {\em $q$-convex}, if it admits 
a smooth exhaustion function $\rho\colon X\to\r$ which is $q$-convex on $X$, %
resp.\ on $\{x\in X\colon \rho(x)>c\}$ for some $c\in \r$.
A 1-complete complex manifold is just a Stein manifold. 
If $X$ is a $(n-1)$-complete complex manifold with $(n-1)$-convex exhaustion 
function $\rho$, then we can take a sublevel set of $\rho$ for $L$ in the theorem.

The construction of the proper holomorphic map in Theorem \ref{thmMainSec2}
is inductive. The main addition to the previous constructions of proper holomorphic maps in
\cite{ForstnericGlobevnik1992,Globevnik2000IMJ,DrinovecForstneric2007DMJ}
is the control of the placement of the
disc near the point $p$ provided by Lemma \ref{lemmasmallperturb}.
Outside this neighborhood we control the 
placement of the curve in such a way that no intersection with $L$ occur. 

The following lemma will provide the main step 
in the inductive construction of a proper holomorphic map. 
It is a consequence of \cite[Lemma 4.2]{DrinovecForstneric2010AJM}
that assures that any map $f$ bellow is a core map of a spray of maps and 
\cite[Lemma 6.3]{DrinovecForstneric2007DMJ} that gives the new map $g$.
Main methods in the proof are the solution to a certain Riemann-Hilbert
problem which gives appropriate local corrections and the gluing of holomorphic sprays.

\begin{lemma}
\label{inductivestep}
Let $X$ be a complex manifold of dimension $n>1$   equipped with a Riemannian metric $d$ and let
$\rho\colon  X\to\r$ be a smooth exhaustion function 
which is $(n-1)$-convex on $\{x\in  X \colon\rho(x)>M_1\}$ for some $M_1\in \r$.
Let $S$ be a bordered Riemann surface, $s\in S$, let $U\Subset S$ be an open subset, let
$f\colon\overline S \to  X$ be  a continuous map, holomorphic on $S$,
and $M_1<M_2$, such that 
$f(z)\in \{x\in  X\colon\rho(x)\in (M_1,M_2)\}$ for all $z\in\overline S\setminus U$. 
Given $\varepsilon>0$, $M_3>M_2$, and an integer $k\ge 1$,
there exists a continuous map $g\colon\overline S\to  X$,  holomorphic on $S$,
satisfying the following properties:
\begin{itemize}
\item[(i)]   $g(z)\in \{x\in  X\colon\rho(x)\in (M_2,M_3)\}$ for $ z\in b S$,
\item[(ii)]  $g(z)\in \{x\in X\colon\rho(x)\in (M_1,M_3)\}$ for $z\in\overline S\setminus U$,
\item[(iii)] $d(f(z),g(z))<\varepsilon$ for $z\in \overline U$,
\item[(iv)]  the $k-1$ jets of $f$ and $g$ at $s$ are equal.
\end{itemize}
\end{lemma}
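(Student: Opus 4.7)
The plan is to combine the two ingredients already flagged by the authors: the spray construction of \cite[Lemma~4.2]{DrinovecForstneric2010AJM}, which embeds $f$ in a parametrized family of maps, and the Riemann--Hilbert based boundary-push argument of \cite[Lemma~6.3]{DrinovecForstneric2007DMJ}. The crucial geometric input is the $(n-1)$-convexity of $\rho$ on $\{\rho>M_1\}$: it guarantees at every point at least two positive eigenvalues of the Levi form of $\rho$, which is exactly what the Riemann--Hilbert step requires in order to attach small analytic discs whose boundaries sit in a prescribed higher level set of $\rho$.

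First I would embed $f$ as the core map of a period-dominating holomorphic spray $F\colon \overline S\times r\b^N\to X$, continuous on $\overline S\times r\b^N$, holomorphic on $S\times r\b^N$, with $F(\cdot,0)=f$. The construction is flexible enough to prescribe that every spray direction vanishes at $s$ to order $\ge k$, so each member $F(\cdot,w)$ has the same $(k-1)$-jet as $f$ at $s$, automatically securing (iv). Shrinking $r>0$, by continuity and the open condition $f(\overline S\setminus U)\subset\{M_1<\rho<M_2\}$, I may assume that every $F(\cdot,w)$ still maps $\overline S\setminus U$ into $\{M_1<\rho<M_2\}$ and differs from $f$ on $\overline U$ by less than $\varepsilon/2$.

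Next I would carry out the boundary push. Cover $bS$ by small boundary arcs $I_1,\dots,I_m$ with one-sided neighborhoods $V_j\Subset \overline S\setminus(U\cup\{s\})$, chosen fine enough that on each $f(V_j)$ there is a fixed complex $2$-dimensional subspace along which the Levi form of $\rho$ is strictly positive. Using such a direction, the method of \cite[Lemma~6.3]{DrinovecForstneric2007DMJ} produces small analytic discs centered near $f(z)$ whose boundaries lie in $\{M_2<\rho<M_3\}$; solving the associated Riemann--Hilbert boundary value problem yields continuous maps $f_j\colon\overline V_j\to X$, holomorphic on $V_j$, close to $f$ on $V_j\setminus I_j$, and with boundary values on $I_j$ pushed into $\{M_2<\rho<M_3\}$.

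Finally I would glue these local modifications into the spray. Writing $f_j=F(\cdot,\gamma_j)$ for small transition maps $\gamma_j\colon \overline V_j\to r\b^N$ and solving a non-linear Cartan--Cousin splitting problem of the type standard in this circle of results, one obtains a single $\gamma\colon\overline S\to r\b^N$, holomorphic on $S$, vanishing at $s$ to order $\ge k$, small on $\overline U$ and comparable to $\gamma_j$ near $I_j$; the desired map is $g:=F(\cdot,\gamma)$. Properties (i) and (ii) then follow from the push on $I_j$ together with the fact that $g$ stays in the valid range on $\overline S\setminus U$, (iii) from smallness of $\gamma$ on $\overline U$, and (iv) from the vanishing of $\gamma$ at $s$ together with the corresponding vanishing of the spray directions. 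The principal obstacle is the quantitative balancing in the splitting step: $\gamma$ must be small enough globally to preserve the approximation on $\overline U$ and to keep values on $\overline S\setminus U$ inside $\{M_1<\rho<M_3\}$, yet large enough near each $I_j$ to realize the boundary push. Achieving both simultaneously is exactly what the uniform estimates supplied by the $(n-1)$-convexity of $\rho$ in \cite[Lemma~6.3]{DrinovecForstneric2007DMJ} make possible.
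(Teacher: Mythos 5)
Your proposal follows exactly the route the paper takes: the lemma is presented there as a direct consequence of \cite[Lemma 4.2]{DrinovecForstneric2010AJM} (embedding $f$ as the core map of a spray, with spray directions vanishing to order $k$ at $s$ for the jet interpolation) combined with \cite[Lemma 6.3]{DrinovecForstneric2007DMJ} (the Riemann--Hilbert boundary push using the two positive Levi eigenvalues supplied by $(n-1)$-convexity, followed by gluing of sprays). Your sketch is a correct and somewhat more detailed account of that same argument.
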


\begin{proof}[Proof of Theorem \ref{thmMainSec2}]
We choose local coordinates in a neighborhood $V$ of $p$ and an open neighborhood $U$ of $s$ in $S$ 
such that $U$ is biholomorphic to $\d$ and $f(U)\Subset V$.
By Lemma \ref{lemmasmallperturb} there is a neighborhood $U_0\Subset U$,
$\epsilon_1\in (0,\epsilon)$, and an integer $k>0$ such that 
for any holomorphic map $ g \colon S \to X$,
which has the same $k-1$ jet at $s$ as $f$, and
$d( f ( z ), g ( z ))<\epsilon_1 $ for $ z\in \overline U$ 
it holds that $g(\overline U)\subset V$ and
 $ g  (\overline U_0)\cap L  =\{p\}$.

Let $f_0=f$. There exist an exhaustion function $\rho\colon X\to \r$ and $M_0\in\r$ such that 
$\rho$ is $(n-1)$-convex on $\{x\in  X \colon\rho(x)>M_0\}$, and
\begin{equation}\label{r_0}
\max_{x\in \cd}\rho(x)<M_0<\rho (f_0(z)) \text{ for } z\in \overline S\setminus U_0.
\end{equation}

Let $M_{-2}=\max_{x\in\overline D}\rho(x)$. 
Choose an increasing sequence $\{M_i\}_{i\ge -2}$ converging to $\infty$
such that
$$M_{-2}<M_{-1}<M_0<\rho (f_0(z))<M_1 \text{ for }z\in \overline S\setminus U_0.$$

There is a decreasing sequence $\{\epsilon_i\}_{i\ge 1}$ converging to $0$ such that for $i\ge -2$,
\begin{equation}\label{rho}
\rho(x)\in [M_{i+1},M_{i+3}] \text{ and } d(x,y)<\epsilon_{i+3} \text{ then } \rho(x)>M_{i}.
\end{equation}

Choose a sequence of compact subsets $\{K_i\}_{i\ge 1}$ in $S$ such that $\cup_i K_i=S$.
We shall inductively construct a sequence of continuous maps $\{f_i\colon\overline S\to X\}_{i\ge 0}$, holomorphic on $S$, and
a sequence of open sets $\{U_i\}_{i\ge 0}$, $U_i\Subset U_{i+1}$, $U_0\cup K\subset U_1$, $\cup_i U_i=S$,  such that the following hold for all $i\in\z_+$:
\begin{enumerate}[\rm(i)]
		\item $\rho(f_i(z))\in (M_i,M_{i+1}) $ for $z\in bS$,
		\item $\rho(f_i(z))\in (M_i,M_{i+1})$ for $z\in \overline S\setminus U_{i+1}$,
		\item $\rho(f_{i}(z))\in (M_{i-1},M_{i+1})$ for $z\in \overline S\setminus U_{i}$,
		\item $d(f_{i+1}(z),f_{i}(z))<\frac{\epsilon_{i+1}}{2^{i+1}}$ for $z\in \overline U_{i+1}$,
		\item  the $k-1$ jets of $f_{i+1}$ and $f_{i}$ at $s$ are equal.
\end{enumerate}

Notice that by \eqref{r_0} the map $f_0$  meets conditions (i) and (iii) for $i=0$.
Choose  $U_1$, $K\cup U\Subset U_1$,  such that (ii) for $i=0$ is satisfied.
Let $j\in \n$ and assume that we have already constructed $f_0,\ldots,f_{j-1}$ and $U_0,\ldots,U_j$
that satisfy properties (i), (ii) and (iii) for $i=0,\ldots j-1$, and properties (iv) and (v) for  $i=0,\ldots j-2$.
We use Lemma \ref{inductivestep} to obtain a continuous map $f_j\colon\overline S\to X$, holomorphic on $S$,
which satisfies properties (i) and (iii) for $i=j$, and properties (iv) and (v) for $i=j-1$. Now we choose $U_{j+1}$, $U_j\cup K_j\Subset U_{j+1}$, such that $f_j$ satisfies (ii)
for $i=j$. This finishes the inductive construction.

It follows by (iv) that the sequence $\{f_n\}$ converges uniformly on compact sets in $S$
to a holomorphic map that will be denoted by $g$. 
For all $j\ge -2$ and $z\in \overline U_{j+3}$ we get by (iv) 
\begin{align}
\nonumber
d(g(z),f_{j+2}(z))&\le d(f(z)_{j+2},f_{j+3}(z))+d(f(z)_{j+3},f_{j+4}(z))+\cdots \\
\label{ocenaf}
&<\frac{\epsilon_{j+3}}{2^{j+3}}+\frac{\epsilon_{j+4}}{2^{j+4}}+\cdots <\epsilon_{j+3} .
\end{align}

The property (i) for $i=j+2$ and the property (iii) for $i=j+2$ imply that 
$\rho(f_{j+2}(z))\in (M_{j+1},M_{j+3})$   for $ z\in \overline S\setminus U_{j+2}$.
Therefore by \eqref{rho} and \eqref{ocenaf} for any $j\ge 0$ we obtain that 
$\rho(g(z))>M_j$ for $z\in \overline U_{j+3}\setminus U_{j+2}$,
which implies that the map $g$ is proper and that $g(z)\notin L$ 
for $z\in S\setminus U_{0}$.
Property (v) implies that 
the $k-1$ jets of $f$ and $g$ at $s$ are equal.
By the choice of $k$, $U_1$, $\epsilon_1$  the estimate  \eqref{ocenaf} implies that 
$g(\overline U_0)\cap L  =\{p\}$ 
and that $d(g(z),f(z))<\epsilon$ for $z\in K$.
This concludes the proof of the theorem.
\end{proof}

\section{Proper holomorphic curves in manifolds with the density property}

A complex manifold $X$ enjoys the \emph{density property} if the Lie algebra generated
by all $\c$-complete holomorphic vector fields is dense in the Lie algebra of all
holomorphic vector fields on $X$ (see Varolin \cite{Varolin2001JGA,Varolin2000IJM}).
Similarly, one defines the \emph{volume density property} 
of a complex manifold $X$ endowed with a holomorphic volume form $\omega$, by
considering the Lie algebra of all holomorphic vector fields on $X$ annihilating $\omega$ (see Kaliman and Kutzschebauch \cite{KalimanKutzschebauch2010IM}).
Given a compact subset $K$ in a Riemann surface $S$ 
we say that a map $f\colon K\to X$ is holomorphic 
if it is holomorphic on some neighborhood of $K$.

Part (2) in Theorem \ref{thm1} follows from the following theorem:

\begin{theorem}
\label{thmMainSec2.2}
Let $X$ be a Stein manifold of $\dim X=n>1$ with the density property or the volume density
property  equipped with a Riemannian metric $d$. 
Let $L\subset X$ be a $\Ocal (X)$-convex compact set and $p\in L$. 

Given an open Riemann surface $S$, a $\Ocal (S)$-convex compact subset $K\subset S$ with nonempty interior $\mathring K$,
$s\in \mathring K$, and a holomorphic map 
$f\colon K\to X$ with $f(s)=p$, having a finite order of contact with $L$ at $p$, and $f(K\setminus\{s\})\cap L  =\emptyset$, there exists a proper holomorphic immersion with simple double points
$g\colon S\to X$ with $g(s)=p$,  $g(S\setminus\{s\})\cap L  =\emptyset$,
and $d(g(z),f(z))<\epsilon$ for $z\in K$.
If $n\ge 3$  then $g$
can be chosen an embedding.

In particular, if $L$ is a polynomially convex compact subset in $\c^n$, $p\in L$, and
$f\colon \d\to X$ is a holomorphic map  with $f(0)=p$, having a finite order of contact with $L$ at $p$, then for any open Riemann surface $S$, $s\in S$, there is a proper holomorphic immersion with simple double points
$g\colon S\to X$ with $g(s)=p$,  $g(S\setminus\{s\})\cap L  =\emptyset$.
If $n\ge 3$  then $g$
can be chosen an embedding.
\end{theorem}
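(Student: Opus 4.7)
The plan is to adapt the inductive construction from the proof of Theorem \ref{thmMainSec2} to an open Riemann surface $S$ and a Stein target $X$ with the density (or volume density) property, the main new ingredient being Andersén--Lempert theory in place of Lemma \ref{inductivestep}: sufficiently small isotopies of biholomorphic maps between Stein Runge subdomains of $X$ can be approximated uniformly by global holomorphic (respectively volume-preserving) automorphisms of $X$; see \cite{Varolin2001JGA,Varolin2000IJM,KalimanKutzschebauch2010IM} and the discussion in \cite{DrinovecForstneric2007DMJ}.

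To begin I would fix two exhaustions: a normal exhaustion of $S$ by smoothly bounded $\Ocal(S)$-convex compact sets $K = K_0 \Subset K_1 \Subset K_2 \Subset \cdots$ with $\bigcup_m K_m = S$, and an exhaustion of $X$ by $\Ocal(X)$-convex compacta $L \subset \mathring{L}_1 \Subset L_2 \Subset \cdots$ with $\bigcup_m L_m = X$. Lemma \ref{lemmasmallperturb}, applied in a coordinate neighborhood of $p$, produces a compact neighborhood $\overline U_0 \Subset \mathring K$ of $s$, a number $\epsilon_1\in(0,\epsilon)$, and an integer $k\ge 1$ such that every holomorphic map $g\colon S\to X$ whose $(k-1)$-jet at $s$ equals that of $f$ and which is $\epsilon_1$-close to $f$ on $\overline U_0$ automatically satisfies $g(\overline U_0)\cap L=\{p\}$. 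I would also fix a sequence $\epsilon_m\downarrow 0$ decreasing fast enough that the telescoping argument at the end of the proof of Theorem \ref{thmMainSec2} applies.

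Inductively I would then construct holomorphic maps $f_m$ on neighborhoods of $K_m$ satisfying (a) the $(k-1)$-jet of $f_m$ at $s$ is that of $f$; (b) $f_m(K_m\setminus \overline U_0)\cap L=\emptyset$ and $f_m(K_m\setminus K_{m-1})\subset X\setminus L_{m-1}$; (c) $\sup_{K_{m-1}} d(f_m,f_{m-1})<\epsilon_m$; (d) $f_m$ is a holomorphic immersion with only simple double points, and an embedding if $n\ge 3$. At the inductive step, I would first extend $f_m$ to a holomorphic map $\tilde f_{m+1}$ on a neighborhood of $K_{m+1}$ by Mergelyan--type approximation on the $\Ocal(S)$-convex set $K_{m+1}$. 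Next, on a Stein Runge neighborhood of $\tilde f_{m+1}(K_{m+1})$ in $X$ I would construct a smooth isotopy of biholomorphic self-maps equal to the identity to order $k-1$ at $p$ and close to the identity on $\tilde f_{m+1}(K_m)$, which carries $\tilde f_{m+1}(K_{m+1}\setminus \mathring K_m)$ into $X\setminus L_m$. Andersén--Lempert then furnishes a global $\Phi\in\operatorname{Aut}(X)$ (volume preserving in the volume density case) approximating this isotopy; setting $f_{m+1}=\Phi\circ \tilde f_{m+1}$, followed by a generic small perturbation to restore (d), completes the induction. The limit $g=\lim_m f_m$ exists uniformly on compacta by (c); holomorphy and $\epsilon$-closeness to $f$ on $K$ together with the jet condition follow from (a) and (c); properness and $g(S\setminus\{s\})\cap L=\emptyset$ follow from (b) combined with (c) and the choice of $\overline U_0$, $\epsilon_1$, $k$ via Lemma \ref{lemmasmallperturb}; the immersion/embedding property passes to the limit by its openness. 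The concluding \emph{in particular} claim reduces to the main theorem with $X=\c^n$, observing that polynomial convexity coincides with $\Ocal(\c^n)$-convexity, that $\c^n$ has both the density and volume density properties, and that the finite order of contact hypothesis supplies $r>0$ such that $f(r\overline\d\setminus\{0\})\cap L=\emptyset$, so that a small closed coordinate disc around $s\in S$ may be identified with $r\overline\d$ and taken as $K$.

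The main obstacle is the simultaneous coordination of (a)--(c) in the Andersén--Lempert step: the global automorphism $\Phi$ must be close to the identity on $\tilde f_{m+1}(K_m)$, push $\tilde f_{m+1}(K_{m+1}\setminus K_m)$ out of $L_m$, and have $(k-1)$-jet equal to the identity at $p$. The jet condition at $p$ is arranged by choosing the generating vector fields in the Andersén--Lempert construction to vanish to order $k$ at $p$; the relevant density statement for the subalgebra of complete holomorphic vector fields vanishing to order $k$ at a prescribed point is a standard refinement of the density property, obtained by cut-off and $\dibar$-corrections on the ambient Stein manifold. The genericity step (d), as well as the organization of the target sets $L_m$ so that the pushing step is compatible with keeping $f_m(K_m)$ essentially frozen, are standard within the circle of ideas used in \cite{ForstnericGlobevnik1992,Globevnik2000IMJ,DrinovecForstneric2007DMJ}.
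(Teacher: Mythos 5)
Your proposal is correct and follows essentially the same strategy as the paper: Lemma \ref{lemmasmallperturb} gives the local control of the image near $p$ (the choice of $\overline U_0$, $\epsilon_1$ and the $(k-1)$-jet condition at $s$), and the global construction is a proper holomorphic immersion/embedding with jet interpolation at $s$ that stays in $X\setminus L$ outside $K$ and is close to $f$ on $K$. The only difference is one of packaging: the paper invokes this global ingredient as a black box (Theorem \ref{thmopen}, i.e.\ \cite[Remark 1.3]{Forstneric2018JAM} with the jet interpolation built in), whereas you unfold the underlying inductive Anders\'en--Lempert argument explicitly.
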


Andrist and Wold \cite{AndristWold2014AIF} proved that an open Riemann surface $S$ immerses into Stein manifold $X$ of $\dim X\ge 2$ with the (volume) density property properly holomorphically. In the case $\dim X\ge 3$ they were able to choose this immersion to be an embedding.
In \cite{AndristForstnericRitterWold2016JAM} the authors extended this result and proved that
any Stein manifold $S$ can be embedded into Stein manifold $X$ with the (volume) density property properly holomorphically if $\dim X\ge 2\dim S+1$.

Forstneri\v c and Ritter \cite{ForstnericRitter2014MZ} proved that holomorphic maps from Stein manifolds $S$ of dimension $<n$ to the complement of a compact convex set $L$  in $\c^n$ satisfy the basic Oka property with approximation and interpolation.
Furthermore, if $2\dim S\le n$ they proved that for any polynomially convex subset $L$ of $\c^n$ and for any holomorphic map $f$ defined on a $\Ocal (S)$-convex compact set $K\subset  S$ such that $f(bK)\subset \c^n\setminus L$ there exists a proper holomorphic map $g\colon S\to \c^n$
such that $g(S\setminus K)\subset  \c^n\setminus L$.
The proof depends on Anders\'en-Lempert-Forstneri\v c-Rosay theorem on approximation of 
isotopies of injective holomorphic maps by holomorphic automorphisms which also holds
on Stein manifolds with the (volume) density property. That enabled Forstneri\v c  to extend the 
above results \cite{Forstneric2018JAM}.
More precisely, in the proof of Theorem \ref{thmMainSec2.2} we use \cite[Remark 1.3]{Forstneric2018JAM}. The interpolation of a higher order jet can be easily built in the construction:

\begin{theorem}
\label{thmopen}
Let $X$ be a Stein manifold of $\dim X=n>1$ with the density property or the volume density
property  equipped with a Riemannian metric $d$. 
Let $L\subset X$ be a compact $\Ocal (X)$-convex set. 
Assume that $S$ is an open Riemann surface, $K$ is a compact $\Ocal (S)$-convex set in $S$ 
and $s\in S$.

Let $U$ be an open neighborhood of $K$ and $f\colon U\cup \{s\}\to X$ a holomorphic map such that $f(bK)\cap L=\emptyset$.
Given $\epsilon>0$ and an integer $k\ge 1$ there
exists a proper holomorphic immersion with simple double points $g\colon S\to X$ 
such that the $k-1$ jets of $f$ and $g$ at $s$ are equal,
satisfying $g(S\setminus K)\subset X\setminus L$, 
and $d(f(z),g(z))<\epsilon$ for $z\in K$.
If $n\ge 3$  then $g$
can be chosen an embedding.
\end{theorem}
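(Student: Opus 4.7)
The plan is to follow the proof of \cite[Remark~1.3]{Forstneric2018JAM}, which already delivers all conclusions of Theorem~\ref{thmopen} except the prescribed $(k-1)$-jet at $s$, and to fold the jet interpolation into every step of the inductive construction. First I fix a smooth strictly plurisubharmonic exhaustion $\rho\colon X\to\r$ with $L\subset\{\rho<c_0\}$ together with constants $c_0<c_1<\cdots\to\infty$; a normal exhaustion $K=K_0\Subset K_1\Subset K_2\Subset\cdots$ of $S$ by $\Ocal(S)$-convex compact sets with $\bigcup_j K_j=S$; neighborhoods $V_j\Supset K_j$; and I write $p=f(s)$, noting that $s\in\mathring K_0$.

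Second, I construct inductively holomorphic maps $f_j\colon V_j\to X$, starting from $f_0=f$, with the following properties: (i) the $(k-1)$-jet of $f_j$ at $s$ equals that of $f$; (ii) $f_j(bK_j)\subset\{\rho>c_j\}$, which in particular gives $f_j(bK_j)\cap L=\emptyset$; (iii) $d(f_j(z),f_{j-1}(z))<\epsilon/2^{j+1}$ for $z\in K_{j-1}$; (iv) $f_j$ is an immersion with only simple double points on $K_j$, and an embedding there when $n\ge 3$. The inductive step decomposes into two substeps: first I extend $f_{j-1}$ from a neighborhood of $K_{j-1}$ across the annulus $K_j\setminus K_{j-1}$ by a Mergelyan/Oka-type approximation that preserves the $(k-1)$-jet at $s$ (subtract off the low-order part of $f_{j-1}$ at $s$ and approximate the remainder, which vanishes to order $k$); then I apply the Andersen--Lempert--Forstneri\v c--Rosay approximation theorem, valid on $X$ thanks to the (volume) density property, to produce a holomorphic automorphism $\Phi$ of $X$ that is uniformly close to the identity on $f_{j-1}(K_{j-1})$ and pushes the image of $bK_j$ into $\{\rho>c_j\}$, and I replace the extension by its composition with $\Phi$. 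Standard transversality arguments in the style of Andrist--Wold \cite{AndristWold2014AIF} ensure the immersion/embedding conclusions. The limit $g=\lim_j f_j$ on compact sets is the desired proper holomorphic map.

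The decisive new ingredient is that the automorphism $\Phi$ at each step must fix the $(k-1)$-jet of the identity at $p$, so that $\Phi\circ f_{j-1}$ retains the prescribed jet of $f$ at $s$. This is provided by a jet-interpolation version of the Andersen--Lempert theorem on Stein manifolds with the (volume) density property: the Lie algebra generated by $\c$-complete holomorphic vector fields on $X$ that vanish to order $k$ at $p$ (and, in the volume-density case, additionally annihilate the volume form) is dense in the Lie algebra of all holomorphic vector fields on $X$ vanishing to order $k$ at $p$, and the time-one flows of compositions of such fields are consequently dense among automorphisms of $X$ whose $(k-1)$-jet at $p$ is the identity. I expect the main obstacle to be precisely establishing and invoking this jet-interpolation variant in a form compatible with the push-out-and-avoid step; once it is available, the modifications to the proof in \cite{Forstneric2018JAM} (in particular using the techniques of \cite{KalimanKutzschebauch2010IM}) are routine.
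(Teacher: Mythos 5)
Your proposal follows exactly the route the paper takes: the paper offers no written-out proof of Theorem~\ref{thmopen}, only the assertion that the jet interpolation ``can be easily built in'' to the construction of \cite[Remark 1.3]{Forstneric2018JAM}, and your sketch is a faithful expansion of that plan, correctly identifying the one genuinely new ingredient (the jet-interpolation form of the Anders\'en--Lempert--Forstneri\v c--Rosay theorem on Stein manifolds with the (volume) density property, applied with automorphisms whose $(k-1)$-jet at $p=f(s)$ is the identity). The only cosmetic caveat is that the statement allows $s\notin K$ (note the domain $U\cup\{s\}$ of $f$), so you should take $K_0=K\cup\{s\}$, which is still $\Ocal(S)$-convex, rather than assume $s\in\mathring K_0$.
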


\begin{proof}[Proof of Theorem \ref{thmMainSec2.2}]
We choose local coordinates in a neighborhood $V$ of $p$ and 
an open neighborhood $U'\subset U$ of $s$ in $S$ 
such that $U'$ is biholomorphic to $\d$ and $f(U')\Subset V$.
By Lemma \ref{lemmasmallperturb} there is a neighborhood $U''\Subset U'$,
$\epsilon_1\in (0,\epsilon)$, and an integer $k>0$ such that 
for any holomorphic map $ g \colon S \to X$ such that
the $k-1$ jets of $f$ and $g$ at $s$ are equal, and
$d( f ( z ), g ( z ))<\epsilon_1 $ for $ z\in U'$ it holds that $g(\overline U'')\subset V$ and
 $ g  (\overline U''\setminus \{s\})\cap L  =\emptyset$.
There is $\epsilon_2\in (0,\epsilon_1)$ such that $\dist(f(z),L)>\epsilon_2$ for $z\in K\setminus U''$.
 
By Theorem \ref{thmopen} there
exists a proper holomorphic immersion with simple double points $g\colon S\to X$ 
such that the $k-1$ jets of $f$ and $g$ at $s$ are equal,
satisfying 
$g(S\setminus K)\subset X\setminus L$, 
and $d(f(z),g(z))<\epsilon_2$ for $z\in K$.
The choice of $\epsilon_2$ implies that $g(K\setminus\{s\})\cap L =\emptyset$.
This proves the theorem.
\end{proof}

\section{Example}
\label{example}
The domain 
$$D=\{(z,w)\in\c^2\colon -\Re w + \frac 15 |w|^2 + |z|^8 +|z|^2\Re (z^6)+ 10 |z|^{10} + |wz|^2<0 \}$$
has the following properties:
\begin{itemize}
\item $D$ is a bounded pseudoconvex domain in $\c^2$ with smooth boundary, 
\item $\cd$ has a Stein Runge neighborhood basis, 
\item $0\in bD$ is a weakly pseudoconvex point of finite 1-type, 
\item $D$ is not locally convexifiable at $0$, 
\item $D$ has a global support hypersurface at $0$.
\end{itemize}

The domain $D$ is a modification of the Kohn-Nirenberg domain
$$D'=\{(z,w)\in\c^2\colon -\Re w +  |z|^8 +\frac{15}{7}|z|^2\Re (z^6)+ |wz|^2<0 \},$$
which is weakly pseudoconvex domain with smooth boundary that does not have a support hypersurface at $0$.
Calamai \cite{Calamai2014BKMS} constructed a \emph{bounded} weakly pseudoconvex domain with smooth boundary that does not have a support hypersurface at $0$.
Kol\'a\v r \cite{Kolar1995MRL,Kolar2008MZ} constructed examples of smoothly bounded 
nonconvexifiable pseudoconvex domains with local support hypersurfaces.
Our example is built on these constructions.

\begin{proof}
Let 
$$\rho(z,w)= -\Re w + \frac 15 |w|^2 + |z|^8 +|z|^2\Re (z^6)+ 10 |z|^{10} + |wz|^2, (z,w)\in\c^2.$$
It is not difficult to check that the level set $\{\rho(z,w)=0\}=bD$ is smooth.
To see that $D$ is bounded write
$\rho(z,w)= \frac 15 |w-\frac 52|^2 -\frac 54 + |z|^8(1 +\frac{\Re (z^6)}{z^6})+ 10 |z|^{10} + |wz|^2$
and we derive that
$$\{(z,w)\in\c^2\colon\rho (z,w)<0 \}\subset \Big\{(z,w)\in\c^2\colon \left|w-\frac 52\right|^2 <\frac 52 ,\ |z|<\root {10}\of{\frac 18}\Big\}.$$
We calculate the Levi form of $\rho$ 
$$\Lcal _\rho(z,w)=
\left[ \begin{array}{cc}
	16|z|^6 +7\Re (z^6)+ 250 |z|^{8} + |w|^2 & w \bar z\\
	\bar w z & \frac 15 + |z|^2\\
\end{array}\right].
$$
It is easy  to see that the Levi form is strictly positive definite on $\c^2\setminus \{0,0\}$.
Therefore $D$ is pseudoconvex, $\cd$ has a Stein Runge neighborhood basis, and
$0$ is a weakly pseudoconvex point.

To prove that $D$ is not locally convexifiable at $0$ we compute the defining equation for $bD$ near $0$: 
$$\Re w=|z|^8 +|z|^2\Re (z^6)+o(\Im w,|z|^8),$$
and by \cite[Proposition 3]{Kolar1995MRL} the domain $D$ 
is not convex in any holomorphic coordinates around $0$.
From this expression it also follows that $0$ is a point of finite type $8$.
Since $\rho(z,0)>0$ for $z\ne 0$, $\c\times \{0\}$ is a global supporting hypersurface
for $bD$ at $0$. 
\end{proof}

\subsection*{Acknowledgements}
The authors thank Mike Lawrence for bringing the question of 
the existence of proper holomorphic discs attached to boundaries of pseudoconvex domains
to their attention and Franc Forstneri\v c for helpful discussions.

The authors are supported in part by the research program P1-0291 
and grants J1-5432 and J1-9104 from ARRS, Republic of Slovenia.


{\bibliographystyle{abbrv} \bibliography{bibDS}}

\vskip 3mm

\noindent Barbara Drinovec Drnov\v sek

\noindent Faculty of Mathematics and Physics, University of Ljubljana, Jadranska 19, SI--1000 Ljubljana, Slovenia, and\\
Institute
of Mathematics, Physics and Mechanics, Jadranska 19, SI--1000 Ljubljana, Slovenia.

\noindent e-mail: {\tt barbara.drinovec@fmf.uni-lj.si}

\noindent Marko Slapar

\noindent Faculty of Education, University of Ljubljana, Kardeljeva plo\v s\v cad 16, SI--1000 Ljubljana, Slovenia, \\
Faculty of Mathematics and Physics, University of Ljubljana,
Jadranska 19, SI--1000 Ljubljana, Slovenia, and\\
 Institute
of Mathematics, Physics and Mechanics, Jadranska 19, SI--1000 Ljubljana, Slovenia.

\noindent e-mail: {\tt marko.slapar@pef.uni-lj.si}

\end{document}